\newtheorem{theorem}{Theorem}[section]
\newtheorem{corollary}{Corollary}[section]
\newtheorem{remark}{Remark}[section]
\DeclareMathOperator{\RM}{Re}
\begin{document}
\title{On the subclasses  associated with the Bessel-Struve kernel functions}
\author{ Saiful R. Mondal$^\ast$ , Al Dhuain Mohammed} 
\address{ Department of Mathematics,
King Faisal University, Al-Hasa 31982, Saudi Arabia,  \ Email: {\tt smondal@kfu.edu.sa ; albhishi@hotmail.com.com }}
\thanks{$\ast$ Corresponding authors}

\subjclass[2010]{30C45}
\keywords{Bessel-struve operator, Starlike function , convex function ,  Hadamard product .}

\maketitle
\begin{abstract}
The article investigate the necessary and sufficient conditions for the normalized Bessel-struve kernel functions belonging to the classes $\mathcal{T}_\lambda(\alpha)$ , $\mathcal{L}_\lambda(\alpha)$. Some linear operators involving the Bessel-Struve operator are also considered.
\end{abstract}

\section{Introduction}
In this article we will consider the class $\mathcal{H}$ of all analytic functions defined in the unit disk $\mathbb{D}=\{z\in\mathbb{C}:|z|< 1\}$.
Suppose that $\mathcal{A}$ is the subclass of $\mathcal{H}$ consisting of function which are
normalized by $f(0)=0=f'(0)-1$ and also univalent in $\mathbb{U}$. Thus each function $f \in \mathcal{A}$ posses the power series
\begin{align}\label{eqn:analytic-function}
f(z)=z+\sum_{n=2}^\infty a_{n}z^n.
\end{align}
We also consider the  subclass $\mathcal{J}$  of $\mathcal{H}$ consisting of the function have the power series as
\begin{align}\label{eqn:analytic-fun-negative}
f(z)=z-\sum_{n=2}^\infty b_nz^n, \quad b_n >0.
\end{align}
The Hadamard product (or convolution) of two function  $f(z)=z+\sum_{n=2}^\infty a_nz^n \in\mathcal{H}$ and
 $g(z)=z+\sum_{n=2}^\infty b_nz^n\in\mathcal{H}$ is defined  as
 \begin{align*}
(f\ast g)(z)=z+\sum_{n=2}^\infty a_n b_n z^n , z\in\mathbb{D}.
\end{align*}
Dixit and Pal \cite{saad13} introduced the subclass $\mathcal{R}^\tau(A,B)$   consisting  the  function $f\in\mathcal{H}$ which satisfies the inequality
\begin{align*}
\bigg|\frac{f'(z)-1}{(A-B)\tau-B[f'(z)-1]}\bigg|<1,
\end{align*}
for $\tau\in\mathbb{C}\setminus\{0\}$ and $-1\leq B < A \leq 1$. The class $\mathcal{R}^\tau(A,B)$ is the generalization of many well known subclasses, for example, if $\tau=1$ and $A=-B=\alpha \in [0,1)$, then  $\mathcal{R}^\tau(A,B)$ is the subclass of $\mathcal{H}$ which consist the functions satisfying the inequality $\left|\frac{f'(z)-1}{f'(z)+1}\right|< \alpha$  which was studied by  Padmanabhan \cite{saad13}, Caplinger and Causey \cite{saad6} and many others. If the function $f$ of the form $\eqref{eqn:analytic-function}$ belong to the class $\mathcal{R}^\tau (A,B)$, then
\begin{align}\label{eqn:ord-1}
|a_{n}| \leq \frac{(A-B)|\tau|}{n},\quad n\in \mathbb{N}.
\end{align}
The bounds given in $(\ref{eqn:ord-1})$ is sharp.

 In this article we will consider two well-known subclass of $\mathcal{H}$ and denoted as $\mathcal{T}_\lambda ( \alpha)$ and $\mathcal{L}_\lambda ( \alpha)$. The function $f$ in the subclass $\mathcal{T}_\lambda( \alpha)$ satisfy  the analytic criteria
\begin{align*}
\mathcal{T}_\lambda ( \alpha):=\left\{f\in \mathcal{A} :\RM \bigg(\frac{zf'(z)+\lambda z^2f''(z)}{(1-\lambda )f(z)+\lambda zf'(z)}\bigg)> \alpha \right\},
\end{align*}
while  $f \in \mathcal{L}_\lambda ( \alpha)$ have the analytic characterization as
\begin{align*}
\mathcal{L}_\lambda (\alpha):=\left\{f \in \mathcal{A} : \RM \bigg(\frac{\lambda z^3f'''(z)+(1+2\lambda)z^2f''(z) + zf'(z)}{zf'(z)+\lambda z^2f''(z)}\bigg) > \alpha\right\}.
\end{align*}
Here $z \in \mathbb{D}$, $0 \leq \alpha <1$ and $0 \leq \lambda <1$. For more details about this classes and it's generalization see \cite{Altntas, Altntas-2, Altntas-3, HMS} and references therein.  The  sufficient coefficient condition by which a  function $f \in \mathcal{A}$ as defined in \eqref{eqn:analytic-function}
belongs to  the class  $\mathcal{T}_\lambda ( \alpha)$ is
\begin{align}\label{eqn:ord-17}
\sum_{n=2}^{\infty} (n \lambda - \lambda +1)(n- \alpha ) |a_n| \leq 1- \alpha,
\end{align}
and $f$ is in $\mathcal{L}_\lambda ( \alpha)$ is
\begin{align}\label{eqn:ord-18}
\sum_{n=2}^{\infty} n(n\lambda-\lambda+1)(n-\alpha)|a_n|\leq 1-\alpha.
\end{align}
The significance of the class $\mathcal{T}_\lambda(\alpha)$ and  $\mathcal{L}_\lambda(\alpha)$ is that, it will reduce to
some classical subclass of $\mathcal{H}$ for specific choice of $\lambda$. For example, if $\lambda=0$, $\mathcal{T}_\lambda(\alpha)=\mathtt{S}^\ast(\alpha)$ is the class of all starlike functions of order $\alpha$ with respect to the origin
and $\mathcal{L}_\lambda(\alpha)=\mathtt{C}(\alpha)$ is  the class of all starlike functions of order $\alpha$.

Denote $\mathcal{T}^\ast_\lambda(\alpha)=\mathcal{T}_\lambda(\alpha) \cap \mathcal{J}$   and $\mathcal{L}^\ast_\lambda(\alpha)=\mathcal{L}_\lambda(\alpha) \cap \mathcal{J}$. Then \eqref{eqn:ord-17} and \eqref{eqn:ord-18} are respectively
the necessary and sufficient condition for any $f \in \mathcal{J}$ is in $\mathcal{T}^\ast_\lambda(\alpha)$ and $\mathcal{L}^\ast_\lambda(\alpha)$.

Bessel and Struve functions arise in many problems of applied mathematics and mathematical physics. The properties of these functions were studied by many researchers in the past years from many different point of views. Recently the Bessel-Struve kernel and the so-called Bessel-Struve intertwining operator have been the subject of some research from the point of view of the operator theory, see \cite{GS,GS2,KS} and the references therein.
The Bessel-Struve kernel function ${S}_{\nu}$  is defined by the series
$${S}_{\nu}(z)=\sum_{n\geq0}\frac{{\Gamma(\nu+1)}\Gamma{\left(\frac{n+1}{2}\right)}}{\sqrt{\pi} n! \Gamma\left(\frac{n}{2}+\nu+1\right)} z^n,$$
where $\nu>-1.$ This function is  a particular case when $\lambda_1=1$ of the unique solution ${S}_{\nu}(\lambda x)$ of the initial value problem
$$\mathtt{L}_{\nu} u(z)= \lambda_1^2 u(z), \quad u(0)=1, u'(0)=\frac{\lambda_1 \Gamma{(\nu+1)}}{\sqrt{\pi}\Gamma{(\nu+\frac{3}{2}})},$$ where for $\nu>-1/2$ the expression $\mathtt{L}_{\nu}$ stands for the Bessel-Struve operator defined by
$$\mathtt{L}_{\nu}u(z)= \frac{d^2u}{dz^2}(z)+\frac{2\nu+1}{x}\left(\frac{du}{dz}(z)-\frac{du}{dz}(0)\right)$$
with an infinitely differentiable function $u$ on $\mathbb{R}.$

Motivated by the results connecting various subclasses with the Bessel functions \cite{Mondal,saad5}, the Struve functions \cite{Yagmur, Orhan}, the hypergeometric function  and the
Confluent hypergeometric functions \cite{Silverman2, Cho}, in Section \ref{sec2} we obtain several conditions by which the normalized Bessel-Struve kernel functions $zS_\nu$ is the member of $\mathcal{T}_\lambda(\alpha)$ and $\mathcal{L}_\lambda(\alpha)$.  Also determined the condition for which $z(2-S_\nu(z)) \in \mathcal{T}_\lambda^\ast(\alpha)$ and $z(2-S_\nu(z)) \in \mathcal{L}^\ast_\lambda(\alpha)$.

\section{Main Result} \label{sec2}

\subsection{Inclusion properties of the Bessel-Struve functions}
For convenience throughout in the sequel , we use the following notations
\begin{align}\label{eqn:ord-21}
zS_{\nu}( z)= z+\sum_{n=2}^{\infty}c_{n-1}(\nu)z^n,
\end{align}
and
\begin{align}\label{eqn:ord-22}
\Phi(z)=z(2-S_{\nu}( z))= z-\sum_{n=2}^{\infty}c_{n-1}(\nu)z^n,
\end{align}
where $c_n(\nu)=\frac{{\Gamma(\nu+1)}\Gamma{\left(\frac{n+1}{2}\right)}}{\sqrt{\pi} n! \Gamma\left(\frac{n}{2}+\nu+1\right)}$.

Further from ($\ref{eqn:ord-21}$) a calculation yield
\begin{align}\label{eqn:ord-23}
z^2S'_{\nu} ( z)+zS_{\nu}(z)= z +\sum_{n=2}^{\infty}nc_{n-1}(\nu)z^n.
\end{align}
A differentiation of both side of \eqref{eqn:ord-23} two times with respect to $z$,  gives
\begin{align}\label{eqn:ord-24}
 z^3 S''_{\nu}( z)+3z^2S'_{\nu}( z)+zS_{\nu}(z)= z+\sum_{n=2}^{\infty}n^2c_{n-1}(\nu)z^n,
\end{align}
and
\begin{align}\label{eqn:ord-25}
 z^4S'''_{\nu}( z)+6z^3S''_{\nu}( z)+7z^2S'_{\nu}( z)+zS_{\nu}= z +\sum_{n=2}^{\infty}n^3 c_{n-1}(\nu)z^n.
\end{align}
Now we will state and proof our main results.
\begin{theorem}\label{thm:fun-21}
For $\nu> -1/2$ and $0 \leq \alpha, \lambda <1$, let
\begin{align}\label{eqn:ord-26}
\lambda S''_{\nu}(1)+(1-\lambda \alpha)S'_{\nu}(1)+(1- \alpha)S_{\nu}(1)
\leq 2(1- \alpha).
\end{align}
Then the normalized Bessel-Struve function $ zS_{\nu}( z) \in \mathcal{T}_\lambda( \alpha)$.
\end{theorem}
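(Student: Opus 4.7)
The plan is to verify the sufficient coefficient condition \eqref{eqn:ord-17} directly for the power series \eqref{eqn:ord-21} of $zS_\nu(z)$. Since $\nu>-1/2$ makes every Gamma factor in $c_{n-1}(\nu)=\tfrac{\Gamma(\nu+1)\Gamma(n/2)}{\sqrt{\pi}(n-1)!\Gamma((n-1)/2+\nu+1)}$ positive, we have $|a_n|=c_{n-1}(\nu)$, so absolute values may be dropped throughout.

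First I would expand the weight
\begin{align*}
(n\lambda-\lambda+1)(n-\alpha)=\lambda n^{2}+(1-\lambda-\lambda\alpha)n-\alpha(1-\lambda),
\end{align*}
so that the quantity to be estimated splits into a linear combination of $\sum n^{2}c_{n-1}(\nu)$, $\sum n\,c_{n-1}(\nu)$ and $\sum c_{n-1}(\nu)$. The three basic sums are read off from \eqref{eqn:ord-21}, \eqref{eqn:ord-23} and \eqref{eqn:ord-24} by evaluating at $z=1$:
\begin{align*}
\sum_{n=2}^{\infty}c_{n-1}(\nu) &= S_{\nu}(1)-1,\\
\sum_{n=2}^{\infty}n\,c_{n-1}(\nu) &= S'_{\nu}(1)+S_{\nu}(1)-1,\\
\sum_{n=2}^{\infty}n^{2}c_{n-1}(\nu) &= S''_{\nu}(1)+3S'_{\nu}(1)+S_{\nu}(1)-1.
\end{align*}
(Here I use implicitly that each series converges at $z=1$ under the stated hypothesis, which is itself a consequence of the bound we are about to derive.)

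Next I would substitute these three identities into the expanded sum, collect coefficients of $S''_\nu(1)$, $S'_\nu(1)$, $S_\nu(1)$ and the constant term, and then impose the inequality $\le 1-\alpha$ from \eqref{eqn:ord-17}. After moving the constant term to the right hand side, the required inequality is precisely of the form
\begin{align*}
\lambda S''_{\nu}(1)+\bigl(\text{coeff}\bigr)S'_{\nu}(1)+(1-\alpha)S_{\nu}(1)\le 2(1-\alpha),
\end{align*}
which matches \eqref{eqn:ord-26}. There is no real analytic obstacle: the whole argument is a bookkeeping exercise once the three sum identities are in hand. The only subtle point, and the one I would be most careful about, is the correct bookkeeping of the $S'_\nu(1)$ coefficient when combining the $\lambda n^{2}$, $(1-\lambda-\lambda\alpha)n$ and $-\alpha(1-\lambda)$ contributions, since an arithmetic slip there produces a formally similar but strictly weaker or stronger hypothesis than \eqref{eqn:ord-26}.
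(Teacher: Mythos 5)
Your method is exactly the paper's: expand the weight $(n\lambda-\lambda+1)(n-\alpha)=\lambda n^{2}+(1-\lambda-\lambda\alpha)n-\alpha(1-\lambda)$, substitute the three sum identities (all three of which you state correctly), and collect terms. The gap sits precisely at the point you flagged and then declined to check. Collecting the coefficient of $S'_\nu(1)$ gives $3\lambda+(1-\lambda-\lambda\alpha)=1+2\lambda-\lambda\alpha$, so the condition that actually guarantees \eqref{eqn:ord-17} is
\begin{align*}
\lambda S''_{\nu}(1)+(1+2\lambda-\lambda\alpha)S'_{\nu}(1)+(1-\alpha)S_{\nu}(1)\le 2(1-\alpha),
\end{align*}
which does \emph{not} match \eqref{eqn:ord-26}: it differs by the term $2\lambda S'_{\nu}(1)\ge 0$, so for $\lambda>0$ the stated hypothesis \eqref{eqn:ord-26} is strictly weaker than what the coefficient bound requires, and your closing claim ``which matches \eqref{eqn:ord-26}'' is the one false step in the sketch. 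To be fair, the paper has the identical defect: its own proof arrives at the coefficient $(1-\lambda\alpha+2\lambda)$ for $S'_{\nu}(1)$ and then asserts without justification that \eqref{eqn:ord-26} bounds the result by $1-\alpha$; the theorem statement appears to have dropped the $2\lambda$. Your computation, once completed, proves the theorem with the corrected hypothesis, and agrees with \eqref{eqn:ord-26} only in the starlike case $\lambda=0$ (consistent with the corollary that follows it).

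A secondary point: convergence of $\sum c_{n-1}(\nu)$, $\sum n\,c_{n-1}(\nu)$, $\sum n^{2}c_{n-1}(\nu)$ at $z=1$ is not ``a consequence of the bound we are about to derive'' --- that reasoning is circular as written. It follows directly from the fact that $S_\nu$ is entire (the coefficients $c_n(\nu)$ decay factorially by the ratio test), so $S_\nu(1)$, $S'_\nu(1)$, $S''_\nu(1)$ are finite and the term-by-term evaluations are legitimate; say that instead.
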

\begin{proof}
Consider  the identity
\begin{align*}
zS_{\nu}( z)= z+\sum_{n=2}^{\infty}c_{n-1}(\nu)z^n.
\end{align*}
Then by virtue of  ($\ref{eqn:ord-17}$) it is enough to show that
$F(n,\lambda, \alpha)\leq1-\alpha$, where
\begin{align*}
F(n,\lambda, \alpha):=\sum_{n=2}^{\infty}(n\lambda-\lambda+1)(n-\alpha)c_{n-1}(\nu)
\end{align*}
The right hand side expression of $F(n,\lambda, \alpha)$ can be rewrite as
\begin{align}\label{eqn-FN}
F(n,\lambda,\alpha)=\lambda\sum_{n=2}^{\infty}n^2 c_{n-1}(\nu)+(1-\lambda(1+ \alpha))
\sum_{n=2}^{\infty}n c_{n-1}(\nu)+ \alpha(\lambda-1)\sum_{n=2}^{\infty}c_{n-1}(\nu).
\end{align}
Now for $|z|=1$, it is evident from \eqref{eqn:ord-21}-- \eqref{eqn:ord-24} that
\begin{align*}
 S_{\nu}(1) &=1+\sum_{n=2}^\infty  z^nc_{n-1}(\nu),\\
S'_{\nu}(1)+S_{\nu}(1)& =1+ \sum_{n=2}^\infty n c_{n-1}(\nu),\\
 S''_{\nu}(1)+ 3S'_{\nu}(1)+S_{\nu}(1)&=1+\sum_{n=2}^\infty  n^2 z^nc_{n-1}(\nu).
\end{align*}
Thus \eqref{eqn-FN} reduce to
\begin{align*}
F(n,\lambda,\alpha)
=\lambda S''_{\nu}(1)+(1-\lambda \alpha+2\lambda)S'_{\nu}(1)+(1- \alpha)(S_{\nu}(1)-1).
\end{align*}
and is bounded above by $1-\alpha$ if \eqref{eqn:ord-26} holds. Thus the proof is completed.
\end{proof}
\begin{corollary}
 The normalized Bessel-Struve kernel  function is starlike  of  order $\alpha \in [0,1)$ with respect to origin if
$S'_{\nu}(1)+(1- \alpha)S_{\nu}(1)
\leq 2(1- \alpha).$
\end{corollary}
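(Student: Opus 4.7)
The plan is to derive this corollary as an immediate specialization of Theorem~\ref{thm:fun-21}. The introduction already records that, when $\lambda=0$, the class $\mathcal{T}_\lambda(\alpha)$ reduces to $\mathtt{S}^\ast(\alpha)$, the class of starlike functions of order $\alpha$ with respect to the origin. Consequently, showing that the normalized Bessel-Struve kernel function $zS_\nu(z)$ belongs to $\mathtt{S}^\ast(\alpha)$ is equivalent to showing $zS_\nu(z)\in\mathcal{T}_0(\alpha)$.

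With this identification in hand, I would simply substitute $\lambda=0$ into the sufficient condition \eqref{eqn:ord-26} of Theorem~\ref{thm:fun-21}. The term $\lambda S''_\nu(1)$ vanishes, the factor $(1-\lambda\alpha)$ collapses to $1$, and the inequality becomes exactly
\begin{align*}
S'_\nu(1) + (1-\alpha)\, S_\nu(1) \leq 2(1-\alpha),
\end{align*}
which is precisely the hypothesis of the corollary. Invoking Theorem~\ref{thm:fun-21} then yields $zS_\nu(z)\in\mathcal{T}_0(\alpha)=\mathtt{S}^\ast(\alpha)$, and the corollary follows.

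There is no substantive obstacle: the statement is a literal corollary in the technical sense, obtained by setting one parameter to zero, and the only point worth flagging is the identification $\mathcal{T}_0(\alpha)=\mathtt{S}^\ast(\alpha)$ recorded in the introduction. An alternative, self-contained route would be to rerun the coefficient estimate used in Theorem~\ref{thm:fun-21} directly with $\lambda=0$, appealing to the specialisation of \eqref{eqn:ord-17} to the starlike case; but quoting the theorem is both shorter and more transparent.
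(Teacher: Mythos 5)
Your proof is correct and is exactly the route the paper intends: the corollary is the $\lambda=0$ specialization of Theorem~\ref{thm:fun-21}, using the identification $\mathcal{T}_0(\alpha)=\mathtt{S}^\ast(\alpha)$ noted in the introduction. Substituting $\lambda=0$ into \eqref{eqn:ord-26} gives precisely the stated hypothesis, so nothing further is needed.
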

\begin{theorem}
 For $\nu> -1/2$ and $0 \leq \alpha, \lambda <1$, suppose that
\begin{align}\label{eqn:ord-27}
\lambda S'''_{\nu}(1)+(5\lambda+1-\lambda \alpha)S''_{\nu}(1)+(4\lambda-2\lambda\alpha-\alpha+3)S'_{\nu}(1)+(1-\alpha)S_{\nu}(1)\leq 2(1- \alpha).
\end{align}
Then the normalized Bessel-Struve function $z S_\nu \in \mathcal{L}_\lambda(\alpha)$.
\end{theorem}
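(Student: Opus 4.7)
The plan is to follow the same template as the proof of Theorem \ref{thm:fun-21}, but now invoking the sufficient coefficient bound \eqref{eqn:ord-18} for membership in $\mathcal{L}_\lambda(\alpha)$. With $zS_\nu(z) = z + \sum_{n=2}^\infty c_{n-1}(\nu)z^n$, it suffices to prove
\begin{align*}
G := \sum_{n=2}^\infty n(n\lambda - \lambda + 1)(n-\alpha)\,c_{n-1}(\nu) \leq 1-\alpha.
\end{align*}
So the whole task is to rewrite $G$ in closed form in terms of $S_\nu(1)$, $S'_\nu(1)$, $S''_\nu(1)$, $S'''_\nu(1)$ and show that the hypothesis \eqref{eqn:ord-27} is precisely the resulting inequality.

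First I would expand the cubic factor as
\begin{align*}
n(n\lambda-\lambda+1)(n-\alpha) = \lambda n^3 + (1-\lambda-\lambda\alpha)n^2 - \alpha(1-\lambda)n,
\end{align*}
so that
\begin{align*}
G = \lambda\sum_{n=2}^\infty n^3 c_{n-1}(\nu) + (1-\lambda-\lambda\alpha)\sum_{n=2}^\infty n^2 c_{n-1}(\nu) - \alpha(1-\lambda)\sum_{n=2}^\infty n\,c_{n-1}(\nu).
\end{align*}
Next I would feed in the three generating identities obtained from \eqref{eqn:ord-23}, \eqref{eqn:ord-24}, \eqref{eqn:ord-25} at $z=1$, namely $\sum n\,c_{n-1}(\nu) = S'_\nu(1)+S_\nu(1)-1$, $\sum n^2 c_{n-1}(\nu) = S''_\nu(1)+3S'_\nu(1)+S_\nu(1)-1$, and $\sum n^3 c_{n-1}(\nu) = S'''_\nu(1)+6S''_\nu(1)+7S'_\nu(1)+S_\nu(1)-1$.

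The remaining work is a straightforward bookkeeping of coefficients of $S^{(k)}_\nu(1)$. Collecting like terms, the coefficient of $S'''_\nu(1)$ is $\lambda$; the coefficient of $S''_\nu(1)$ is $6\lambda + (1-\lambda-\lambda\alpha) = 5\lambda+1-\lambda\alpha$; the coefficient of $S'_\nu(1)$ is $7\lambda + 3(1-\lambda-\lambda\alpha) - \alpha(1-\lambda) = 4\lambda-2\lambda\alpha-\alpha+3$; the coefficient of $S_\nu(1)$ telescopes to $1-\alpha$; and the constant term reduces to $-(1-\alpha)$. Therefore
\begin{align*}
G = \lambda S'''_\nu(1) + (5\lambda+1-\lambda\alpha)S''_\nu(1) + (4\lambda-2\lambda\alpha-\alpha+3)S'_\nu(1) + (1-\alpha)S_\nu(1) - (1-\alpha),
\end{align*}
and the requirement $G \leq 1-\alpha$ becomes exactly the hypothesis \eqref{eqn:ord-27}.

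The only potential obstacle is the arithmetic bookkeeping of the $S'_\nu(1)$-coefficient and the cancellations leading to the clean $(1-\alpha)S_\nu(1)$ and $-(1-\alpha)$ terms; aside from that, the proof is parallel to Theorem \ref{thm:fun-21} and no new idea is required. One small caveat: the use of \eqref{eqn:ord-18} requires the coefficients $c_{n-1}(\nu)$ to be nonnegative, which holds automatically from the formula $c_n(\nu) = \Gamma(\nu+1)\Gamma(\tfrac{n+1}{2})/(\sqrt{\pi}\, n!\,\Gamma(\tfrac{n}{2}+\nu+1))$ for $\nu>-1/2$, so $|a_n| = c_{n-1}(\nu)$ without absolute values and the preceding computation is legitimate.
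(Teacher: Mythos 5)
Your proof is correct and follows essentially the same route as the paper: apply the sufficient coefficient condition \eqref{eqn:ord-18}, expand $n(n\lambda-\lambda+1)(n-\alpha)$, substitute the three moment identities from \eqref{eqn:ord-23}--\eqref{eqn:ord-25} at $z=1$, and collect coefficients to recover exactly \eqref{eqn:ord-27}. In fact your bookkeeping is slightly cleaner than the printed proof, which contains a transcription slip ($7S''_\nu(1)+6S'_\nu(1)$ instead of $6S''_\nu(1)+7S'_\nu(1)$) in the intermediate line before arriving at the same final expression.
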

\begin{proof}
By virtue of  ($\ref{eqn:ord-18}$), it
suffices to prove that $G(n,\lambda,\alpha) \leq 1-\alpha,$ where
\begin{align*}
G(n,\lambda,\alpha)&=\sum_{n=2}^\infty n(n\lambda-\lambda+1)(n-\alpha)c_{n-1}(\nu)\\
&=\lambda \sum_{n=2}^\infty n^3 c_{n-1}(\nu)+(1-\lambda(1+\alpha))\sum_{n=2}^\infty n^2 c_{n-1}(\nu)+ \alpha(\lambda-1)\sum_{n=2}^\infty n c_{n-1}(\nu).
\end{align*}
For $|z|=1$  and using ($\ref{eqn:ord-23}$)-($\ref{eqn:ord-25}$), it follows that
\begin{align*}
G(n,\lambda,\alpha)&=\lambda(S'''_{\nu}(1)+7S''_{\nu}(1)+6S'_{\nu}(1)+S_{\nu}(1)-1)\\&\quad \quad +(1-\lambda(1+\alpha))(S''_{\nu}(1)+3S'_{\nu}(1)+S_{\nu}(1)-1)
+\alpha(\lambda-1)(S'_{\nu}(1)+S_{\nu}(1)-1)\\&=\lambda S'''_{\nu}(1)+(5\lambda+1-\lambda\alpha)S''_{\nu}(1)+(4\lambda-2\lambda\alpha-\alpha+3)S'_{\nu}(1)+(1-\alpha)(S_{\gamma,\beta}(1)-1)
\end{align*}
which is bounded above by $1-\alpha$ if ($\ref{eqn:ord-27}$) holds, and hence the conclusion.
\end{proof}
\begin{corollary}
 The normalized Bessel-Struve kernel  function is convex  of of order $\alpha \in [0,1)$ if
\begin{align*}
S''_{\nu}(1)+(3-\alpha)S'_{\nu}(1)+(1-\alpha)S_{\nu}(1)\leq 2(1- \alpha).
\end{align*}
\end{corollary}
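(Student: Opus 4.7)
The plan is to mirror the proof of Theorem~\ref{thm:fun-21}, but using the sufficient condition \eqref{eqn:ord-18} in place of \eqref{eqn:ord-17}. Since $zS_\nu(z)=z+\sum_{n\ge 2} c_{n-1}(\nu) z^n$ has nonnegative coefficients $c_{n-1}(\nu)>0$ (as products of Gamma values with $\nu>-1/2$), the absolute value bars in \eqref{eqn:ord-18} can be dropped, and it suffices to show
\begin{align*}
G(n,\lambda,\alpha):=\sum_{n=2}^{\infty} n(n\lambda-\lambda+1)(n-\alpha)\,c_{n-1}(\nu)\;\le\;1-\alpha.
\end{align*}

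First I would expand the polynomial factor as $n(n\lambda-\lambda+1)(n-\alpha)=\lambda n^{3}+\bigl(1-\lambda(1+\alpha)\bigr)n^{2}+\alpha(\lambda-1)n$, which splits $G(n,\lambda,\alpha)$ into the three moment sums $\sum n^{k} c_{n-1}(\nu)$ for $k=1,2,3$. The next step is to evaluate each of these moment sums at $z=1$ using the identities \eqref{eqn:ord-23}, \eqref{eqn:ord-24}, \eqref{eqn:ord-25}: namely
\begin{align*}
\sum_{n\ge2} n c_{n-1}(\nu) &= S'_{\nu}(1)+S_{\nu}(1)-1,\\
\sum_{n\ge2} n^{2} c_{n-1}(\nu) &= S''_{\nu}(1)+3S'_{\nu}(1)+S_{\nu}(1)-1,\\
\sum_{n\ge2} n^{3} c_{n-1}(\nu) &= S'''_{\nu}(1)+6S''_{\nu}(1)+7S'_{\nu}(1)+S_{\nu}(1)-1.
\end{align*}

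The remaining step is the arithmetic of substituting these into the split form of $G$ and collecting the coefficients of $S'''_{\nu}(1)$, $S''_{\nu}(1)$, $S'_{\nu}(1)$, and $S_{\nu}(1)-1$. One finds the coefficient of $S'''_{\nu}(1)$ is $\lambda$, of $S''_{\nu}(1)$ is $6\lambda+\bigl(1-\lambda(1+\alpha)\bigr)=5\lambda+1-\lambda\alpha$, of $S'_{\nu}(1)$ is $7\lambda+3\bigl(1-\lambda(1+\alpha)\bigr)+\alpha(\lambda-1)=4\lambda+3-2\lambda\alpha-\alpha$, and of $S_{\nu}(1)-1$ is $\lambda+\bigl(1-\lambda(1+\alpha)\bigr)+\alpha(\lambda-1)=1-\alpha$. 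Thus $G(n,\lambda,\alpha)$ equals the left side of \eqref{eqn:ord-27} minus $(1-\alpha)$, so the hypothesis \eqref{eqn:ord-27} is exactly the statement $G(n,\lambda,\alpha)\le 1-\alpha$.

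The only real obstacle is bookkeeping in collecting coefficients; there is no analytic difficulty, since positivity of the $c_{n-1}(\nu)$ and the termwise identities coming from \eqref{eqn:ord-21}--\eqref{eqn:ord-25} do all the work. The corollary then follows instantly by setting $\lambda=0$, whereupon the $S'''_{\nu}(1)$ term vanishes and the coefficient of $S''_{\nu}(1)$ becomes $1$, of $S'_{\nu}(1)$ becomes $3-\alpha$, and of $S_{\nu}(1)$ becomes $1-\alpha$.
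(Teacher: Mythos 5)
Your proposal is correct and follows essentially the same route as the paper: the corollary is obtained by specializing the preceding theorem at $\lambda=0$ (where $\mathcal{L}_0(\alpha)$ is the class of convex functions of order $\alpha$), and your derivation of that theorem — expanding $n(n\lambda-\lambda+1)(n-\alpha)$ into moment sums, evaluating them via \eqref{eqn:ord-23}--\eqref{eqn:ord-25} at $z=1$, and collecting coefficients — reproduces the paper's computation exactly, with all the collected coefficients matching \eqref{eqn:ord-27}.
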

\begin{remark}
The condition \eqref{eqn:ord-26} is  necessary and sufficient for
$z(2-S_{\nu}(z))\in\mathcal{T}^*(\alpha),$ while  $z(2-S_{\nu}(z))\in \mathcal{L}^*(\alpha)$ if and only if the condition \eqref{eqn:ord-27} hold.
\end{remark}

\subsection{Operators involving the Bessel-Struve functions and its inclusion properties}

 In this section we considered the linear operator
 $\mathcal{J}_\nu:\mathcal{A} \to \mathcal{A}$
 defined by
 \begin{align*}
 \mathcal{J}_\nu f(z)=zS_{\nu}(z)\ast f(z)=z+\sum_{n=2}^\infty c_{n-1}(\nu)a_nz^n ,
 \end{align*}
In the next result we will study the action of the Bessel-struve operator on the class $\mathcal{R}^\tau(A,B)$.
\begin{theorem}
Suppose that $\nu > -\frac{1}{2}$ and $f\in \mathcal{R}^\tau(A,B)$. For $\alpha, \lambda \in (0,1]$, if
\begin{align}\label{eqn:ord-31}
(A-B)|\tau|\{\lambda S''_{\nu}(1)+(1-\lambda\alpha+2\lambda)S'_{\nu}(1)+(1-\alpha)(S_{\nu}(1)-1)\leq 1-\alpha,
\end{align}
then $\mathcal{J}_{\nu}(f)\in\mathcal{L}_\lambda( \alpha)$.
\end{theorem}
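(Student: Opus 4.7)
The plan is to combine the sharp coefficient bound for $\mathcal{R}^\tau(A,B)$ with the sufficient coefficient criterion \eqref{eqn:ord-18} and the closed-form evaluation of the weighted series in the coefficients $c_{n-1}(\nu)$ that was already worked out in the proof of Theorem~\ref{thm:fun-21}. First I would write
\[
\mathcal{J}_\nu f(z)=z+\sum_{n=2}^\infty c_{n-1}(\nu)\,a_n\,z^n,
\]
so that, by \eqref{eqn:ord-18}, membership in $\mathcal{L}_\lambda(\alpha)$ will follow once
\[
\sum_{n=2}^\infty n(n\lambda-\lambda+1)(n-\alpha)\,c_{n-1}(\nu)\,|a_n|\ \leq\ 1-\alpha
\]
is established.

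Next I would invoke the hypothesis $f\in\mathcal{R}^\tau(A,B)$, which via \eqref{eqn:ord-1} supplies the bound $|a_n|\leq (A-B)|\tau|/n$. The factor $1/n$ from this bound exactly cancels the extra $n$ that distinguishes the $\mathcal{L}_\lambda(\alpha)$ test from the $\mathcal{T}_\lambda(\alpha)$ test, reducing the task to showing
\[
(A-B)|\tau|\sum_{n=2}^\infty (n\lambda-\lambda+1)(n-\alpha)\,c_{n-1}(\nu)\ \leq\ 1-\alpha.
\]
The remaining sum is precisely the quantity $F(n,\lambda,\alpha)$ encountered in the proof of Theorem~\ref{thm:fun-21}, where expanding $(n\lambda-\lambda+1)(n-\alpha)$ into pieces proportional to $n^2$, $n$, and $1$, and then matching with \eqref{eqn:ord-21}, \eqref{eqn:ord-23}, and \eqref{eqn:ord-24} at $|z|=1$, produced the evaluation
\[
\lambda S''_{\nu}(1)+(1-\lambda\alpha+2\lambda)S'_{\nu}(1)+(1-\alpha)\bigl(S_{\nu}(1)-1\bigr).
\]
Substituting this closed form and applying the hypothesis \eqref{eqn:ord-31} finishes the argument.

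There is no genuine obstacle here: the whole point is the cancellation between the $n$ in the $\mathcal{L}_\lambda(\alpha)$ coefficient test and the $1/n$ decay forced on the Taylor coefficients of functions in $\mathcal{R}^\tau(A,B)$. The only mild care needed is that absolute values appear in the coefficient test, but since $c_{n-1}(\nu)>0$ for $\nu>-1/2$ and the weights $(n\lambda-\lambda+1)(n-\alpha)$ are positive in the admissible parameter range, every quantity in sight is nonnegative and the estimate proceeds termwise. An additional routine check is that $\mathcal{J}_\nu f$ indeed lies in $\mathcal{A}$ (so the class definition applies), which is immediate since $c_0(\nu)=1$ forces $\mathcal{J}_\nu f(0)=0$ and $(\mathcal{J}_\nu f)'(0)=1$.
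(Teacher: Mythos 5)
Your proposal is correct and follows essentially the same route as the paper: apply the coefficient test \eqref{eqn:ord-18} to $\mathcal{J}_\nu f$, use the bound $|a_n|\leq (A-B)|\tau|/n$ from \eqref{eqn:ord-1} to cancel the extra factor of $n$, and evaluate the resulting series via \eqref{eqn:ord-21}--\eqref{eqn:ord-24} exactly as in Theorem~\ref{thm:fun-21}. No substantive differences.
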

\begin{proof}
Let $f$ be of the form ($\ref{eqn:analytic-function}$) belong to the class $\mathcal{R}^\tau(A,B)$.
By virtue of the inequality $\eqref{eqn:ord-18}$ it suffices to show that
$
G(n,\lambda,\alpha) \leq 1-\alpha.
$

It follows from \eqref{eqn:ord-1} that the coefficient  $a_n$ for each $f \in \mathcal{R}^\tau(A,B)$ satisfy the inequality
$
|a_n|\leq(A-B){|\tau|}/{n},
$
and hence
\begin{align*}
G(n,\lambda,\alpha)
&\leq(A-B)|\tau|\sum_{n=2}^\infty (n\lambda-\lambda+1)(n-\alpha)c_{n-1}(\nu)\\
&= (A-B)|\tau|\left(\lambda\sum_{n=2}^\infty n^2c_{n-1}(\nu)+(1-\lambda(1+\alpha))\sum_{n=2}^\infty n c_{n-1}(\nu)\right.\\
& \left.\quad \quad\quad  + \alpha(\lambda-1)\sum_{n=2}^\infty c_{n-1}(\nu)\right)
\end{align*}
Using \eqref{eqn:ord-23} and \eqref{eqn:ord-24}, the right hand side of the above inequality reduce to
\begin{align*}
G(n,\lambda,\alpha)&=(A-B)|\tau|\{\lambda(S''_{\nu}(1)+3S'_{\nu}(1)+S_{\nu}(1)-1)
    \\ &\quad\quad +(1-\lambda(1+\alpha))(S'_{\nu}(1)+S_{\nu}(1)-1) +\alpha(\lambda-1)(S_{\nu}(1)-1)\}\\
&=(A-B)|\tau|\{\lambda S''_{\nu}(1)+(2\lambda-\lambda\alpha+1)S'_{\nu}(1)+(1-\alpha) (S_{\nu}(1)-1)\}
\end{align*}
   It is evident from the hypothesis $\ref{eqn:ord-31}$ that $G(n,\lambda,\alpha)$  is bounded above by  $1-\alpha$ and hence the conclusion.
 \end{proof}


\begin{theorem}
For $\nu> -1/{2}$, let
$\mathcal{Q}_\nu(z):=\int_{0}^{z}(2-S_{\nu}(t))dt$.
Then for $\alpha, \lambda \in (0,1]$, the operator  $\mathcal{Q}_\nu\in \mathcal{L}^*_\lambda(\alpha)$
if and only if
\begin{align}\label{eqn:ord-33}
\lambda S''_{\nu}(1)+(2\lambda-\lambda\alpha+1)S'_{\nu}(1)+(1-\alpha)S_{\nu}(1)\leq2(1-\alpha).
\end{align}
\end{theorem}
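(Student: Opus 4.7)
The plan is to reduce the statement directly to the necessary-and-sufficient coefficient criterion \eqref{eqn:ord-18}. First I would expand $\mathcal{Q}_\nu$ as a power series by integrating the series for $S_\nu$ termwise: using $S_\nu(z) = 1 + \sum_{n=1}^\infty c_n(\nu) z^n$ we get
\begin{align*}
\mathcal{Q}_\nu(z) = \int_0^z \left(1 - \sum_{n=1}^\infty c_n(\nu)\, t^n\right) dt = z - \sum_{n=2}^\infty \frac{c_{n-1}(\nu)}{n}\, z^n.
\end{align*}
Since $\nu > -1/2$ forces $c_{n-1}(\nu) > 0$ for every $n \geq 2$, the coefficients $b_n = c_{n-1}(\nu)/n$ are positive, so $\mathcal{Q}_\nu \in \mathcal{J}$. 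This puts us in the setting where \eqref{eqn:ord-18} is both necessary and sufficient for membership in $\mathcal{L}^*_\lambda(\alpha)$, which is what gives the desired \emph{iff} statement.

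Next I would substitute $|a_n| = c_{n-1}(\nu)/n$ into \eqref{eqn:ord-18}. The key observation is a clean cancellation:
\begin{align*}
\sum_{n=2}^\infty n(n\lambda - \lambda + 1)(n-\alpha)\,\frac{c_{n-1}(\nu)}{n} = \sum_{n=2}^\infty (n\lambda - \lambda + 1)(n-\alpha)\, c_{n-1}(\nu),
\end{align*}
so the $\mathcal{L}^*_\lambda(\alpha)$ criterion for $\mathcal{Q}_\nu$ coincides with the quantity $F(n,\lambda,\alpha)$ already computed in the proof of Theorem~\ref{thm:fun-21}.

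Invoking that computation, I would write
\begin{align*}
F(n,\lambda,\alpha) = \lambda S''_\nu(1) + (1 - \lambda\alpha + 2\lambda)S'_\nu(1) + (1-\alpha)(S_\nu(1) - 1),
\end{align*}
and then the inequality $F(n,\lambda,\alpha) \leq 1-\alpha$ rearranges, after moving the constant $-(1-\alpha)$ across, to
\begin{align*}
\lambda S''_\nu(1) + (2\lambda - \lambda\alpha + 1)S'_\nu(1) + (1-\alpha)S_\nu(1) \leq 2(1-\alpha),
\end{align*}
which is precisely \eqref{eqn:ord-33}. Because we are working inside $\mathcal{J}$, both directions follow simultaneously from the necessity-and-sufficiency of \eqref{eqn:ord-18}.

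There is no real analytic obstacle here; the proof is essentially bookkeeping. The one point that deserves a sentence of care is the interpretation of $S_\nu(1)$, $S'_\nu(1)$, $S''_\nu(1)$ as the limiting values of the respective series at $|z|=1$, which is implicit throughout Section~\ref{sec2} and is legitimate precisely when the hypothesis \eqref{eqn:ord-33} ensures convergence; I would note this briefly and then conclude.
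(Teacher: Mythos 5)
Your proof is correct and follows essentially the same route as the paper's: expand $\mathcal{Q}_\nu$ as a series, cancel the factor of $n$ against the $n$ in the coefficient criterion \eqref{eqn:ord-18}, and reuse the computation from the proof of Theorem~\ref{thm:fun-21}, with membership in $\mathcal{J}$ supplying the ``if and only if.'' You are in fact slightly more careful than the paper, whose penultimate display drops the $-1$ in $(1-\alpha)(S_{\nu}(1)-1)$ before nonetheless arriving at the correct condition \eqref{eqn:ord-33}.
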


\begin{proof}
Note that
\begin{align*}
\mathcal{Q}_\nu(z)=z-\sum_{n=2}^\infty c_{n-1}(\nu)\frac{z^n}{n}.
\end{align*}
It is enough to show that
\begin{align*}
\sum_{n=2}^\infty n(n\lambda-\lambda+1)(n-\alpha)\left(\frac{c_{n-1}(\nu)}{n}\right)\leq 1-\alpha.
\end{align*}
Since
\begin{align*}
\sum_{n=2}^\infty n(n\lambda-\lambda+1)(n-\alpha)\left(\frac{c_{n-1}(\nu)}{n}\right)=\sum_{n=2}^\infty (n\lambda-\lambda+1)(n-\alpha)c_{n-1}(\nu),
\end{align*}
proceeding as the proof of Theorem $\ref{thm:fun-21}$, we get
\begin{align*}
\sum_{n=2}^\infty n(n\lambda-\lambda+1)(n-\alpha)\left(\frac{c_{n-1}(\nu)}{n}\right)=\lambda S''_{\nu}(1)+(2\lambda-\lambda\alpha+1)S'_{\nu}(1)+(1-\alpha)S_{\nu}(1),
\end{align*}
which is bounded above by $1-\alpha$ if and only if ($\ref{eqn:ord-33}$)
\end{proof}

\end{document}